\newcommand{\dd}{\mathrm{d}}
\newcommand{\id}{\operatorname{id}}
\newcommand{\vol}{\operatorname{vol}}
\newcommand{\jac}{\operatorname{Jac}}
\newcommand{\Aut}{\operatorname{Aut}}
\newcommand{\Iso}{\operatorname{Iso}}
\newcommand{\Symp}{\operatorname{Symp}}
\newcommand{\Met}{\operatorname{Met}}
\newcommand{\Ric}{\operatorname{Ric}}
\newcommand{\V}{\operatorname{Vol}}
\newcommand{\embsigma}{\mathrm{Emb}^\mathcal L(\Sigma,M)}
\newcommand{\lagsigma}{\mathcal L(\Sigma,M)}
\newcommand{\R}{\mathds R}
\newcommand{\Z}{\mathds Z}
\newcommand{\C}{\mathds C}
\renewcommand{\O}{\mathsf O}
\newcommand{\SU}{\mathsf{SU}}
\newcommand{\SO}{\mathsf{SO}}
\newcommand{\J}{\mathcal J(M,\omega)}
\newcommand{\M}{\mathrm{Met}(M,\omega)}
\newtheorem{theorem}{Theorem}[]
\newtheorem{lemma}[theorem]{Lemma}
\newtheorem{proposition}[theorem]{Proposition}
\newtheorem{corollary}[theorem]{Corollary}
\newtheorem*{theorem*}{Theorem}
\theoremstyle{definition}
\newtheorem{definition}[theorem]{Definition}
\theoremstyle{remark}
\newtheorem{example}[theorem]{Example}
\title[Equivariant deformations of Hamiltonian stationary Lagrangians]{Equivariant deformations of Hamiltonian stationary Lagrangian submanifolds}
\author[R. G. Bettiol]{Renato G. Bettiol}
\author[P. Piccione]{Paolo Piccione}
\author[B. Santoro]{Bianca Santoro}
\address{
\begin{tabular}{lll}
University of Notre Dame & &Universidade de S\~ao Paulo \\
Department of Mathematics & & Departamento de Matem\'atica \\
255 Hurley Building & & Rua do Mat\~ao, 1010 \\
Notre Dame, IN, 46556-4618, USA & & S\~ao Paulo, SP, 05508-090, Brazil\\
\emph{E-mail address}: {\tt rbettiol@nd.edu} & & \emph{E-mail address}: {\tt piccione@ime.usp.br}\\
\end{tabular}
\bigskip
\hfill\break\hfill\indent
\begin{tabular}{lll}
The City College of New York, CUNY&&\\
Mathematics Department&&\\
138th St and Convent Avenue, NAC 4/112B&&\\
New York, NY, 10031, USA&&\\
\emph{E-mail address}: {\tt bsantoro@ccny.cuny.edu} & &
\end{tabular}
}
\numberwithin{equation}{section}
\numberwithin{theorem}{section}
\thanks{The first named author is partially supported by the NSF grant DMS-0941615, USA. The second named author is partially supported by Fapesp and CNPq, Brazil. The third named author is partially supported by the NSF grant DMS-1007155 and PSC-CUNY grants, USA}
\date{\today}
\begin{document}
\begin{abstract}
We prove an equivariant deformation result for Hamiltonian stationary Lagrangian submanifolds of a K\"ahler manifold, with respect to deformations of its metric and almost complex structure that are compatible with an isometric Hamiltonian group action. This yields existence of Hamiltonian stationary Lagrangian submanifolds in possibly non-K\"ahler symplectic manifolds whose metric is arbitrarily close to a K\"ahler metric.
\end{abstract}

\maketitle
%\tableofcontents

\section{Introduction}

Let $(M,\omega)$ be a symplectic manifold with a Riemannian metric $g$. A submanifold $\Sigma$ of $M$ with $\dim M=2\dim \Sigma$ is \emph{Lagrangian} if the restriction $\omega|_\Sigma$ of the symplectic form to this submanifold vanishes identically. Multiple constrained variational problems related to minimizing volume of Lagrangian submanifolds have been extensively studied in the literature, see for instance \cite{ohnita,ohnita2,bc,jls,lee,oh1,oh2,sw}. Among these, an important constrained variational problem is related to minimizing volume under \emph{Hamiltonian variations}. The corresponding critical points, called \emph{Hamiltonian stationary Lagrangian submanifolds}, and their possible deformations in an equivariant setup, are the main objects of study in this paper.

Denote by $\lagsigma$ the space of Lagrangian submanifolds of $M$ that are diffeomorphic to $\Sigma$.
A Hamiltonian variation of a Lagrangian submanifold $\Sigma$ is simply a variation by a Hamiltonian vector field $X$, i.e., $X$ is a vector field on $M$ along $\Sigma$, such that the $1$-form $\omega(X,\cdot)|_{\Sigma}$ is exact. In this situation, the submanifolds $\Sigma_t:=\{\exp_p(tX_p):p\in\Sigma\}$, $|t|<\varepsilon$, are also Lagrangian, i.e., $\Sigma_t\in\lagsigma$ is a curve through $\Sigma=\Sigma_0$; but generally not all Lagrangians near $\Sigma$ are obtained this way, see Section~\ref{sec:stationarysubmflds}. More precisely, this curve $\Sigma_t$ of Lagrangians stays inside the integral leaf through $\Sigma$ of a certain (integrable) distribution of $\lagsigma$, with codimension $b_1(\Sigma)$, that we call the \emph{Hamiltonian distribution}, see Subsection~\ref{subsec:unpar}. The integral leaves of this distribution are locally parametrized by the first de Rham cohomology $H^1(\Sigma,\mathds R)$, which is a real vector space of dimension $b_1(\Sigma)$. Given a closed $1$-form $\eta$ on $\Sigma$, we denote by $[\eta]\in H^1(\Sigma,\mathds R)$ its cohomology class and by $\lagsigma_{[\eta]}$ the integral leaf of the Hamiltonian distribution that corresponds to $[\eta]$. When $\eta$ is exact, i.e., $[\eta]=0$, then the above means that $\lagsigma_{[\eta]}$ is the space of Hamiltonian variations of $\Sigma$. A Lagrangian submanifold $\Sigma\subset M$ is a $g$-\emph{Hamiltonian stationary Lagrangian submanifold} if it has critical volume (with respect to the volume form induced by $g$) among all its Hamiltonian variations.

Hamiltonian stationary Lagrangian submanifolds have been used, among others, to provide canonical representatives of the Lagrangian homology (the part of the homology generated by Lagrangian cycles), see \cite{sw}. Issues related to their existence in various contexts were discussed in \cite{bc,jls,lee,oh2}, and questions regarding their stability were addressed in \cite{ohnita,ohnita2,oh1}. In the present paper, we are interested in an \emph{equivariant} rigidity
 notion, that allows to deform $g$-Hamiltonian stationary Lagrangian submanifolds according to a suitable deformation of the metric $g$. This works in a similar fashion to the implicit function theorem, but taking into account the ambiguity
imposed by a group of symmetries of the variational problem. Namely, we assume that there is an isometric Hamiltonian action of a compact Lie group $G$ on $M$. Such an action carries Lagrangian submanifolds to Lagrangian submanifolds (since the action is by symplectomorphisms), and preserves the volume functional (since the action is by isometries). In this way, a Hamiltonian stationary Lagrangian submanifold $\Sigma$ is automatically degenerate due to the presence of action fields, and an appropriate \emph{equivariant $G$-nondegeneracy} condition is introduced, see Definition~\ref{def:nondeg}. Our main result concerns deformations of such equivariantly nondegenerate submanifolds, corresponding to deformations of the metric $g$ (or of the associated almost complex structure $J$) that
preserve the group of symmetries $G$. More precisely, we prove the following:

\begin{theorem*}
Let $(M,\omega,g_0,J_0)$ be a K\"ahler manifold with an isometric Hamiltonian action of a compact Lie group $G$.
Suppose that  either:
\begin{itemize}
\item[(A)] There exists a smooth deformation $[-\delta,\delta]\ni t\mapsto g_t\in\Met(M)$ of the metric $g_0$, such that $G$ acts by $g_t$-isometries for all $t\in [-\delta,\delta]$; and let $J_t:=J_{g_t}$ be the corresponding family of $\omega$-compatible almost complex structures;\footnote{See \eqref{eq:pj} and Corollary \ref{cor:deform}.} 
\end{itemize}
or,
\begin{itemize}
\item[(B)] There exists a smooth deformation $[-\delta,\delta]\ni t\mapsto J_t\in\J$ of $J_0$ by $\omega$-compatible almost complex structures, such that $G$ acts by $J_t$-biho\-lo\-morphisms for all $t\in [-\delta,\delta]$; and let $g_t(\cdot,\cdot):=\omega(\cdot,J_t\cdot)$ be the corresponding family of Riemannian metrics.
\end{itemize}
Suppose $\Sigma_0\subset (M,\omega)$ is a $G$-nondegenerate $g_0$-Hamiltonian stationary Lagrangian submanifold.
Then there exists $\varepsilon>0$, a neighborhood $\mathcal V$ of $\Sigma_0\in\lagsigma$, a neighborhood $\mathcal E$ of $[0]\in H^1(\Sigma,\mathds R)$  and a
smooth map $\Sigma\colon\, (-\varepsilon,\varepsilon)\times\mathcal E\,\to\mathcal V$; such that $\Sigma_{t,[\eta]}:=\Sigma(t,[\eta])$ is a $g_t$-Hamiltonian stationary Lagrangian submanifold in $\lagsigma_{[\eta]}$ for all $|t|<\varepsilon$  and all $[\eta]\in\mathcal E$,
and $\Sigma(0,[0])=\Sigma_0$. Moreover, if $\Sigma_*\in\mathcal V$ is a $g_t$-Hamiltonian stationary Lagrangian submanifold
in $\lagsigma_{[\eta]}$ sufficiently close to the $G$-orbit of $\Sigma_0$, then there exists $\phi\in G$ such that $\phi(\Sigma_*)=\Sigma_{t,[\eta]}$.
\end{theorem*}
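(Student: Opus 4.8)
The plan is to recast the Hamiltonian stationary condition as the vanishing of a section of an appropriate (Banach) vector bundle over a suitable infinite-dimensional manifold parametrizing Lagrangian submanifolds in a fixed integral leaf, and then to apply an \emph{equivariant} version of the implicit function theorem. Concretely, I would first set up a slice for the $G$-action: near $\Sigma_0$, the space $\lagsigma$ is modeled on closed $1$-forms on $\Sigma$ (via the Weinstein Lagrangian neighborhood theorem), so that the leaf $\lagsigma_{[\eta]}$ is modeled on $\{\eta_0 + \dd f : f \in C^\infty(\Sigma)\}$ for a fixed representative $\eta_0$ of $[\eta]$; the Hamiltonian stationary Euler--Lagrange operator then becomes a nonlinear operator $\Phi_t$ acting on functions $f$ (modulo constants), parametrized by $t$ and by $[\eta]\in\mathcal E$. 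The infinitesimal action fields coming from $G$ span a finite-dimensional subspace $\mathfrak g \cdot \Sigma_0$ of the model space, on which $\Phi_0$ vanishes identically, so the linearization $\dd\Phi_0$ necessarily has this subspace in its kernel; the $G$-nondegeneracy hypothesis (Definition~\ref{def:nondeg}) is precisely the statement that the kernel is \emph{exactly} $\mathfrak g\cdot\Sigma_0$ and that $\dd\Phi_0$ is surjective onto a complement of its image, i.e.\ that $\Phi_0$ restricted to a slice transverse to the $G$-orbit has invertible linearization.

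From there the argument is structurally the one of the equivariant implicit function theorem as in \cite{jls} or the standard bifurcation-theoretic framework. First I would choose a $G$-invariant complement $W$ to $\mathfrak g\cdot\Sigma_0$ inside the model space (using an invariant inner product obtained by averaging over the compact group $G$), so that $W$ is a local slice for the $G$-action. Restricting the full operator $(t,[\eta],f)\mapsto \Phi_t(f,[\eta])$ to $W$, the linearization in the $W$-direction at $(0,[0],0)$ is an isomorphism onto the target by $G$-nondegeneracy (here one needs the elliptic regularity/Fredholm theory for the fourth-order — or, after the usual reduction, second-order — operator governing Hamiltonian stationarity, which has been established earlier in the paper, and one works in appropriate Hölder or Sobolev completions and then bootstraps to smoothness). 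The implicit function theorem then yields a unique smooth family $(t,[\eta])\mapsto f(t,[\eta]) \in W$ with $\Phi_t(f(t,[\eta]),[\eta]) = 0$ and $f(0,[0])=0$; setting $\Sigma_{t,[\eta]}$ to be the Lagrangian submanifold corresponding to $\eta_0(t,[\eta]) + \dd f(t,[\eta])$ gives the asserted family, and $\Sigma(0,[0])=\Sigma_0$ by uniqueness.

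For the final \emph{uniqueness up to $G$} statement: given any $g_t$-Hamiltonian stationary $\Sigma_*\in\mathcal V\cap\lagsigma_{[\eta]}$ sufficiently close to the $G$-orbit of $\Sigma_0$, I would first move it by some $\phi_0\in G$ so that $\phi_0(\Sigma_*)$ lies in the slice neighborhood (i.e.\ its defining $1$-form lies in $\eta_0(t,[\eta]) + \dd W$ up to the constraint), using the tubular-neighborhood/slice theorem for the $G$-action on $\mathcal V$; then $\phi_0(\Sigma_*)$ corresponds to some $f_*\in W$ with $\Phi_t(f_*,[\eta])=0$ and $f_*$ close to $0$, so by the \emph{local uniqueness} clause of the implicit function theorem $f_* = f(t,[\eta])$, whence $\phi_0(\Sigma_*) = \Sigma_{t,[\eta]}$ and $\phi := \phi_0$ works.

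The main obstacle I anticipate is the bookkeeping around the two group quotients that are simultaneously in play: the \emph{Hamiltonian distribution} quotient (working modulo constants / inside a fixed cohomology leaf, which accounts for the parameter $[\eta]$ and the $H^0$-cokernel of $\dd$) and the \emph{$G$-action} quotient (working modulo $\mathfrak g\cdot\Sigma_0$). One must verify that these interact cleanly — in particular that the action fields are genuinely tangent to a single leaf $\lagsigma_{[\eta]}$ (which holds because $G$ acts by symplectomorphisms, hence preserves the Hamiltonian distribution and the cohomology class), that the chosen slice $W$ and invariant complement can be taken compatibly with the leaf structure, and that the linearized operator's Fredholm index and kernel/cokernel identification survive the restriction to a leaf. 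Handling the functional-analytic setup (choice of Banach completions making $\Phi_t$ smooth and its linearization Fredholm, plus the elliptic bootstrap to recover smoothness of the resulting submanifolds) is routine but must be stated carefully; this presumably refers back to the linearization computations and the precise statement of Definition~\ref{def:nondeg} established earlier in the paper.
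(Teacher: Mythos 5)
Your proposal is correct and follows essentially the same route as the paper: both work in the Weinstein chart identifying the leaf $\lagsigma_{[\eta]}$ with exact perturbations (functions modulo constants), establish Fredholmness of index zero for the fourth-order elliptic Jacobi operator via Schauder estimates, and conclude by an equivariant implicit function theorem. The only difference is presentational --- the paper invokes the equivariant IFT of \cite{bps} as a black box, whereas you sketch its proof directly via a $G$-invariant slice transverse to $\mathfrak g\cdot\Sigma_0$ together with the local-uniqueness clause of the classical IFT.
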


Note that the manifolds $(M,\omega,g_t,J_t)$ might not be K\"ahler for $t\neq 0$; the only requirement is that both the metric $g_t$ and the almost complex structure $J_t$ be compatible with the fixed symplectic form $\omega$. In particular, this result abstractly yields existence of $g$-Hamiltonian stationary Lagrangian submanifolds in certain symplectic manifolds equipped with a metric $g$ that is a small deformation of a K\"ahler metric, see \cite{bc,jls,lee}.

The main ingredients in the proof of the above Theorem are the appropriate variational formulation of the problem, which is cast in (quotients of) H\"older spaces due to Fredholmness reasons and the equivariant implicit function theorem with low regularity studied in \cite{bps}. The latter is an abstract equivariant formulation of the classic implicit function theorem in a low regularity setup tailored to geometric variational problems. Among the crucial hypotheses are that the linear operator that represents the second variation of the functional in question be a Fredholm operator of index zero. This follows easily in the case of Hamiltonian stationary Lagrangians using standard Schauder estimates, since the corresponding operator is a (fourth order) elliptic operator.

This paper is organized as follows. In Section~\ref{sec:basicstuff}, we recall basic concepts in symplectic geometry and various Hamiltonian constructions and deformations preserving their symmetry groups. The main aspects of the constrained variational problem of Hamiltonian stationary Lagrangian submanifolds are studied in Section~\ref{sec:stationarysubmflds}, where we also recall the first and second variations in the K\"ahler case. The rigorous framework for the proof of the above Theorem is discussed in Section~\ref{sec:proofmain}. Finally, Section~\ref{sec:examples} contains a few examples of deformations to which this result applies.

\smallskip
\noindent
{\bf Acknowledgement.} It is our pleasure to thank Andr\'e Carneiro, Richard Hind and Tommaso Pacini for valuable suggestions. We also acknowledge hurricane Sandy, that despite having many victims and causing terrible destruction, at the same time forced the first and third named authors to be stranded for several days while no transportation was
 available, providing a good amount of time to exchange ideas and start a collaboration whose first results led to this note.

\section{Preliminaries}
\label{sec:basicstuff}

In this first section, we recall a few basic facts about the interplay of the isomorphism groups of symplectic, almost complex and Riemannian structures and basic definitions regarding Hamiltonian actions. The reader with a working knowledge of such material may proceed to Section~\ref{sec:stationarysubmflds}.

\subsection{Compatible triples}
Given a (necessarily even dimensional) real vector space $V$, consider the following objects on $V$:
 \begin{itemize}
 \item[(i)] a nondegenerate skew-symmetric bilinear form $\omega\colon V\times V\to\R$;
 \item[(ii)] a complex structure $J\colon V\to V$;
 \item[(iii)] a positive-definite inner product $g\colon V\times V\to\R$.
\end{itemize}
Denote by $\Symp(V,\omega)$ the group of symplectomorphisms of $(V,\omega)$, i.e., automorphisms $T\colon V\to V$ such that $\omega(T\cdot,T\cdot)=\omega(\cdot,\cdot)$. Denote by $\Aut(V,J)$ the group of \emph{$J$-biholomorphisms}, or automorphisms of $(V,J)$, i.e., automorphisms $T\colon V\to V$ that commute with $J$. Finally, denote by $\O(V,g)$ the group of $g$-orthogonal isomorphisms of $V$, i.e., automorphisms $T\colon V\to V$ such that $g(T\cdot,T\cdot)=g(\cdot,\cdot)$.

We say that $(\omega,J,g)$ is a \emph{compatible triple} if $\omega(\cdot,\cdot)=g(J\cdot,\cdot)$, or, equivalently, if $\omega(\cdot,J\cdot)=g(\cdot,\cdot)$. If $(\omega,J,g)$ is a compatible triple, then:
\begin{equation}\label{eq:groups-lin2}
\O(V,g)\cap\Aut(V,J)=\O(V,g)\cap\Symp(V,\omega)=\Aut(V,J)\cap\Symp(V,\omega).
\end{equation}
In other words, given vector spaces $V_i$ endowed with compatible triples $(\omega_i,J_i,g_i)$, $i=1,2$, an isomorphism $T\colon V_1\to V_2$ that preserves any two of the structures in the triple, automatically preserves the third one. In this way, choosing any two structures among $\{\omega,J,g\}$ on a vector space $V$ determines the third one, so that the triple $(\omega,J,g)$ is compatible.

The definition of compatible triples carries over naturally to (even dimensional) smooth manifolds $M$ endowed with
a symplectic form $\omega$, an almost complex structure $J$ and a Riemannian metric $g$. Namely, such a triple $(\omega,J,g)$ is \emph{compatible} if at every point $p\in M$, $(\omega_p,J_p,g_p)$ is a compatible triple on $T_pM$. In the special case where $J$ is an integrable almost complex structure (hence a complex structure), the manifold $M$ equipped with a compatible triple $(\omega,J,g)$ is called a \emph{K\"ahler manifold}.

Equation \eqref{eq:groups-lin2} implies that if $(\omega,J,g)$ is a compatible triple on $M$, then
\begin{multline}\label{eq:groups2}
\Iso(M,g)\cap\Symp(M,\omega)=\Iso(M,g)\cap\Aut(M,J)\\ =\Aut(M,J)\cap\Symp(M,\omega).
\end{multline}
Note that, $\Iso(M,g)$ is always a (finite-dimensional) Lie group, and it is compact when $M$ is compact. The groups $\Aut(M,J)$ and $\Symp(M,\omega)$ are infinite-dimensional, however both intersections $\Iso(M,g)\cap\Symp(M,\omega)$ and $\Iso(M,g)\cap\Aut(M,J)$ are subgroups of $\Iso(M,g)$ which are closed in the $C^1$-topology\footnote{Hence, they are also closed in the $C^0$-topology, since both topologies coincide on $\Iso(M,g_J)$}, and therefore are Lie subgroups of $\Iso(M,g)$.

\subsection{Basic Hamiltonian constructions}\label{subsec:basichamilt}
Suppose $(M,\omega)$ is a symplectic manifold and $X$ is a vector field on $M$. Contracting $X$ with the symplectic form $\omega$, we get a $1$-form on $M$ denoted $\iota_X \omega:=\omega(X,\cdot)$. For convenience, we also use the special notation
\begin{equation}
\sigma_X:=\iota_X \omega=\omega(X,\cdot).
\end{equation}
When $X$ is a field along a submanifold $\Sigma\subset M$, we also write $\sigma_X$ for the $1$-form on $\Sigma$ obtained by pulling back the contracted $1$-form $\iota_X \omega$, i.e., $\sigma_X=x^*(\iota_X \omega)$. As a word of caution, this $1$-form is unrelated to the $2$-form given by the pull-back $x^*\omega$.

A function $h\colon M\to\R$ is called a \emph{Hamiltonian function} or \emph{Hamiltonian potential} for the vector field $X$ on $M$ if
\begin{equation}\label{eq:hamilt}
\dd h=\sigma_X.
\end{equation}
In this case, $X$ is called the \emph{symplectic gradient} of $h$. As a side note, if $(M,\omega)$ is K\"ahler, then \eqref{eq:hamilt} is equivalent to $X=-J\nabla h$, where $\nabla h$ is the Riemannian gradient of $h$.

Vector fields on a symplectic manifold that admit a Hamiltonian potential are called \emph{Hamiltonian vector fields}. Equivalently, a vector field is Hamiltonian if $\sigma_X\in B^1(M)$ is an \emph{exact} $1$-form. Vector fields such that $\sigma_X\in Z^1(M)$ is a \emph{closed} $1$-form are called \emph{symplectic vector fields}. The justification for this name is that the flow of such a vector field preserves the symplectic form $\omega$, i.e., the Lie derivative $\mathcal L_X \omega$ vanishes if $X$ is symplectic. Evidently, Hamiltonian fields are always symplectic, and the obstruction for symplectic fields to be Hamiltonian is measured by the first de Rham cohomology $H^1(M,\R)=Z^1(M)/B^1(M)$.

Now, suppose that a Lie group $G$ acts by symplectomorphisms on $(M,\omega)$, i.e., $g\colon M\to M$ preserves $\omega$ for all $g\in G$. Denote by $\mathfrak g$ the Lie algebra of $G$ and by $\mathfrak g^*$ its dual. The $G$-action is said to be \emph{Hamiltonian} if there exists a map $\mu\colon M\to\mathfrak g^*$, called \emph{moment map}, such that
\begin{itemize}
\item[(i)] $\mu\colon M\to\mathfrak g^*$ is $G$-equivariant, where the $G$-action considered on $\mathfrak g^*$ is the coadjoint action;
\item[(ii)] For every $X\in \mathfrak g$, denote by $X^*_p:=\tfrac{\dd}{\dd s}\big(\exp(sX)\cdot p\big)\big|_{s=0}$ the induced action field on $M$.\footnote{i.e., $X^*$ is the vector field on $M$ that is the infinitesimal generator of the $1$-parameter group of diffeomorphisms of $M$ generated by the $1$-parameter subgroup $\R\ni s\mapsto\exp(sX)\in G$.} Then $\langle\mu(\cdot),X^*\rangle\colon M\to\R$ is a Hamiltonian potential for the vector field $X^*$, i.e., $\dd\langle\mu(\cdot),X^*\rangle=\sigma_{X^*}$, where, as above, $\sigma_{X^*}=\iota_{X^*} \omega=\omega(X^*,\cdot)$.
\end{itemize}
In other words, every action field is a Hamiltonian field and the moment map $\mu$ encodes all the corresponding Hamiltonian potentials.

\begin{example}
If $G$ is a closed connected Lie subgroup of $\mathsf U(n+1)$, then the restriction of the transitive $\mathsf U(n+1)$-action on $\C P^n$ to $G$ is a Hamiltonian action, with moment map $\mu([z])=\pi_{\mathfrak g}(-i\, zz^*/2\|z\|^2)$, where $\pi_{\mathfrak g}$ is the orthogonal projection onto $\mathfrak g$, with respect to an $\mathrm{Ad}$-invariant inner product in $\mathsf U(n+1)$.
\end{example}

\subsection{Deformations preserving symmetries}
Constructing compatible triples on a manifold is quite elementary. We are interested in a slightly more elaborate problem, namely that of constructing $1$-parameter families $(\omega,J_t,g_t)$ of compatible triples for a fixed symplectic form $\omega$, that preserve a nontrivial subgroup $G$ of \eqref{eq:groups2} that acts in a Hamiltonian way on $M$, and for $t=0$ turn $M$ into a K\"ahler manifold. In general, most deformations of this type do not produce other K\"ahler structures, i.e., $J_t$ is non-integrable for $t>0$, but this is not an issue for our applications. For an example in which integrability is preserved, see Subsection~\ref{subsec:ricciflow}.

We now observe that, due to \eqref{eq:groups2}, such deformations $(\omega,J_t,g_t)$ can be obtained by first considering a deformation $g_t$ of the metric $g_0$ that preserves the isometric $G$-action; and then considering the corresponding deformation $J_t$ of the almost complex structure $J_0$ with respect to the fixed symplectic form $\omega$, see Corollary \ref{cor:deform}. A few concrete constructions of deformations of compatible triples are described in Section \ref{sec:examples}. Let us give more details on how the above works.

An almost complex structure $J$ is called \emph{$\omega$-compatible} if the triple $(\omega,J,g_J)$, where $g_J(\cdot,\cdot):=\omega(\cdot,J\cdot)$, is a compatible triple. In other words, $J$ is $\omega$-compatible if $g_J$ is a Riemannian metric. Define the spaces:
\begin{equation*}
\begin{split}
\J &:=\big\{J\colon TM\to TM: J \mbox{ is an } \omega\mbox{-compatible almost complex structure}\big\};\\
\M&:=\big\{g\in\Met(M):g=g_J \text{ for some } J\in\J\big\}.
\end{split}
\end{equation*}
The map $\J\ni J\mapsto g_J\in\M$ is clearly a bijection, whose inverse will be denoted by $\M\ni g\mapsto J_g\in\J$. Let us recall the following standard result that, in particular, implies that $\M$ is homotopically equivalent to $\Met(M)$ hence contractible, see \cite[Prop 2.50, 2.51]{ms}.

\begin{proposition}\label{prop:gtoj}
Let $(M,\omega)$ be a symplectic manifold. There exists a smooth retraction $r\colon\Met(M)\to\M$.
\end{proposition}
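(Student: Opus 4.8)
The plan is to carry out the classical fibrewise polar decomposition of a Riemannian metric relative to $\omega$, and then to verify that it defines a smooth retraction onto $\M$; this is the construction behind \cite[Prop.~2.50--2.51]{ms}. First I would, given $g\in\Met(M)$, introduce the bundle endomorphism $A_g\colon TM\to TM$ characterized fibrewise by $\omega(u,v)=g(A_g u,v)$. Skew-symmetry of $\omega$ makes $A_g$ a $g$-skew-adjoint field, and nondegeneracy of $\omega$ makes it invertible; hence $-A_g^2=A_g^\ast A_g$ (adjoint with respect to $g$) is a $g$-symmetric, positive-definite endomorphism field. I would then let $P_g:=\sqrt{-A_g^2}$ be its unique $g$-symmetric positive-definite square root (which commutes with $A_g$) and set
\[
J_g:=P_g^{-1}A_g,\qquad r(g):=g_{J_g}.
\]

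The next step is the elementary linear-algebra verification, done fibrewise, that this is well defined: $J_g^2=P_g^{-2}A_g^2=(-A_g^2)^{-1}A_g^2=-\id$, so $J_g$ is an almost complex structure; and, using that $A_g$ is $g$-skew while $P_g^{-1}$ is $g$-symmetric and commutes with $A_g$, a one-line computation gives $\omega(u,J_g v)=g(P_g u,v)$, so that $g_{J_g}=g(P_g\cdot,\cdot)$ is symmetric and positive-definite. Thus $J_g\in\J$ and $r(g)=g_{J_g}\in\M$. For the retraction property, if $g\in\M$ then $g=g_{J_0}$ for some $J_0\in\J$, and compatibility of $(\omega,J_0,g)$ forces $\omega(u,v)=g(J_0u,v)$, i.e.\ $A_g=J_0$; hence $-A_g^2=\id$, $P_g=\id$, $J_g=J_0$, and $r(g)=g_{J_0}=g$, so $r|_{\M}=\id_{\M}$.

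It then remains to argue smoothness of $g\mapsto r(g)$ as a map of Fréchet manifolds of sections. The assignment $g\mapsto A_g$ is smooth, being locally the composition of metric inversion $g_{ij}\mapsto g^{ij}$ (smooth on $\mathsf{GL}$) with contraction against the fixed smooth tensor $\omega$, so $-A_g^2$ is a smoothly varying field in the open cone of positive-definite symmetric endomorphisms. The hard part—really the only nontrivial point—will be the smooth dependence of the fibrewise square root $P_g$ on $-A_g^2$: I would get this from the fact that $P\mapsto P^2$ is a diffeomorphism of that open cone, its differential $X\mapsto PX+XP$ being an invertible Lyapunov operator there, so the inverse (the square root) is smooth by the inverse function theorem in finite dimensions. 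Consequently $J_g=P_g^{-1}A_g$ and $r(g)=g(P_g\cdot,\cdot)$ depend smoothly on $g$; since every operation involved is a fibre-preserving bundle morphism built from smooth maps, the induced map on spaces of sections is smooth in the $C^\infty$ topology, which completes the proof.
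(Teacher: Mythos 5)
Your proposal is correct and follows essentially the same route as the paper: the fibrewise polar decomposition $A_g=P_gJ_g$ of the endomorphism defined by $\omega(\cdot,\cdot)=g(A_g\cdot,\cdot)$, with $r(g)=g(P_g\cdot,\cdot)=\omega(\cdot,J_g\cdot)$. The only difference is that you spell out the retraction property and the smoothness of the fibrewise square root (via the inverse function theorem on the cone of positive operators), which the paper dispatches by appealing to uniqueness and smoothness of the polar decomposition.
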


\begin{proof}
Given $g\in\Met(M)$, there exists a unique skew-symmetric $(1,1)$-tensor $A_g$ on $M$ that satisfies
$\omega(\cdot,\cdot)=g(A_g\cdot,\cdot)$. Since $\omega$ is everywhere nondegenerate, $A_g$ is everywhere nonsingular. The pointwise polar decomposition of $A_g$ provides two unique $(1,1)$-tensors\footnote{Here, $T^*$ denotes the $g$-adjoint of a $(1,1)$-tensor $T$ on $M$, defined by $g(T^*\cdot,\cdot)=g(\cdot,T\cdot)$.} on $M$,
\begin{equation}\label{eq:pj}
P_g=(A_gA_g^*)^{\frac12} \quad \mbox{ and }\quad J_g=P_g^{-1}A_g,
\end{equation}
such that:
\begin{itemize}
\item[(i)] $P_gJ_g=J_gP_g=A_g$;
\item[(ii)] $P_g$ is positive, i.e., $g(P_g\cdot,\cdot)$ is symmetric and positive-definite;
\item[(iii)] $J_g$ is $g$-orthogonal, i.e., $J_g^*=J_g^{-1}$.
\end{itemize}
Since $A_g$ is skew-symmetric and $P_g$ is symmetric, then
\begin{equation*}
J_g^{-1}=J_g^*=A_g^*P_g^{-1}=-A_gP_g^{-1}=-J_g,
\end{equation*}
i.e., $J_g$ is an almost complex structure. The desired map $r$ is given by $r(g):=g(P_g\cdot,\cdot)$. Since $P_g=J_g^{-1}A_g=J_g^*A_g$, then
\begin{equation}\label{eq:defr(g)}
r(g)=g(P_g\cdot,\cdot)=g(J_g^*A_g\cdot,\cdot)=g(A_g\cdot,J_g\cdot)=\omega(\cdot, J_g\cdot)\in\M.
\end{equation}
By the uniqueness of the polar decomposition, it is also immediate to see that $r(g)=g$ if $g\in\M$, i.e.,
$r$ is a retraction. Smoothness follows immediately from the smoothness of the polar decomposition in the open set of invertible operators.
\end{proof}

As an immediate consequence of the above result and \eqref{eq:groups2}, we get a way of deforming compatible triples preserving their symmetry, whose sole input is a metric deformation preserving an isometric action. This process fits the set of hypotheses (A) in the Theorem in the Introduction. This choice of deformation intuitively allows more examples, since it is generally easier to deform a metric preserving a group action than deforming an almost complex structure preserving its automorphism group.

\begin{corollary}\label{cor:deform}
Let $(\omega,J_0,g_0)$ be a compatible triple on $M$, and suppose that $G$ is a Lie group that acts on $M$ by symplectomorphisms and $g_0$-isometries (hence by $J_0$-biholomorphisms). Assume $g_t$, $t\in [-\delta,\delta]$, is a deformation of $g_0$ such that the $G$-action is by $g_t$-isometries for $t\in [-\delta,\delta]$, and let $J_t:=J_{g_t}$ be the almost complex structure obtained from $g_t$ as in \eqref{eq:pj}. Then the triple $(\omega,J_t,g_t)$, $t\in [-\delta,\delta]$, is compatible and a deformation of $(\omega,J_0,g_0)$, so that $G$ acts by $g_t$-isometries and $J_t$-biholomorphisms, $t\in [-\delta,\delta]$.
\end{corollary}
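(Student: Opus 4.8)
The plan is to read everything off from the pointwise construction in the proof of Proposition~\ref{prop:gtoj}, now run with $g=g_t$ for each $t$, and then to check that that construction commutes with the symmetries. For each $t$ one has the invertible $(1,1)$-tensor $A_{g_t}$ determined by $\omega(\cdot,\cdot)=g_t(A_{g_t}\cdot,\cdot)$ (skew-symmetric for $g_t$, since $\omega$ is skew and $g_t$ is symmetric), its polar factors $P_{g_t}=(A_{g_t}A_{g_t}^*)^{1/2}$ and $J_{g_t}=P_{g_t}^{-1}A_{g_t}$ as in \eqref{eq:pj}, hence $J_t:=J_{g_t}\in\J$ and the $\omega$-compatible metric $g_{J_t}:=\omega(\cdot,J_t\cdot)=g_t(P_{g_t}\cdot,\cdot)=r(g_t)\in\M$ (see \eqref{eq:defr(g)}), which equals $g_t$ exactly when $g_t$ is itself $\omega$-compatible. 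Thus $(\omega,J_t,g_{J_t})$ is by construction a compatible triple for every $t$, so there is nothing further to prove on that count.

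I would then dispose of the smoothness and deformation statements, which are formal. The assignment $g\mapsto A_g$ is a fiberwise algebraic operation in $(\omega,g)$ and the polar decomposition is smooth on the open set of invertible operators, so $t\mapsto J_t$ and $t\mapsto g_{J_t}$ are smooth curves in $\J$ and $\M$. At $t=0$, compatibility of $(\omega,J_0,g_0)$ gives $A_{g_0}=J_0$, hence $A_{g_0}A_{g_0}^*=-J_0^2=\id$, $P_{g_0}=\id$, $J_{g_0}=J_0$ and $g_{J_0}=\omega(\cdot,J_0\cdot)=g_0$; so $(\omega,J_t,g_{J_t})$ really is a deformation of $(\omega,J_0,g_0)$, and by hypothesis $G$ acts by $g_t$-isometries for every $t$.

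The only genuine content is the $G$-invariance of $J_t$, equivalently of the whole triple. Fix $\phi\in G$: by hypothesis $\phi^*\omega=\omega$ and $\phi^*g_t=g_t$, and since $A_{g_t}$ is the unique $(1,1)$-tensor with $\omega=g_t(A_{g_t}\cdot,\cdot)$ — a fiberwise determination forced by nondegeneracy of $g_t$ — $\phi$ preserves $A_{g_t}$, i.e.\ $\dd\phi\circ A_{g_t}=A_{g_t}\circ\dd\phi$. Because $A_{g_t}^*=-A_{g_t}$, the tensor $A_{g_t}A_{g_t}^*$ is preserved as well; and here lies the one step that is not purely formal: $P_{g_t}$ is the \emph{unique} positive square root of $A_{g_t}A_{g_t}^*$, so $P_{g_t}$, and therefore $J_t=P_{g_t}^{-1}A_{g_t}$, is preserved by $\phi$ too — this is exactly the naturality of the polar decomposition under isomorphisms that is packaged into the uniqueness clause of Proposition~\ref{prop:gtoj}. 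Hence every $\phi\in G$ is a $J_t$-biholomorphism; and since $G$ then preserves both $\omega$ and $J_t$, equation~\eqref{eq:groups2} (applied to the compatible triple $(\omega,J_t,g_{J_t})$) shows it also preserves $g_{J_t}$, so the triple $(\omega,J_t,g_{J_t})$ is $G$-invariant, as asserted. I expect the main (indeed essentially the only) obstacle to be exactly this bookkeeping of ``$\phi^*T=T$'' through the polar decomposition; once uniqueness of $P_{g_t}$ is invoked it evaporates, and everything else follows formally from Proposition~\ref{prop:gtoj} and \eqref{eq:groups2}.
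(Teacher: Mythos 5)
Your proof is correct and is essentially the argument the paper intends: the paper states Corollary~\ref{cor:deform} without proof, as an ``immediate consequence'' of Proposition~\ref{prop:gtoj} and \eqref{eq:groups2}, and what you have written out --- naturality of $A_{g_t}$ and of its polar factors under isometric symplectomorphisms, forced by the uniqueness clauses of the construction --- is precisely the content being invoked. Your observation that the compatible triple produced is really $(\omega,J_t,r(g_t))$ rather than $(\omega,J_t,g_t)$ unless $g_t\in\M$ is a fair and correct reading of a point the paper glosses over.
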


\section{Hamiltonian stationary Lagrangian submanifolds}
\label{sec:stationarysubmflds}

\subsection{Lagrangian submanifolds}
\label{sub:LagSubmanifolds}
Let $(M,\omega)$ be a symplectic manifold and $\Sigma$ be a compact manifold with $\dim\Sigma=\tfrac12\dim M$. An embedding $x\colon\Sigma\hookrightarrow (M,\omega)$ is called \emph{Lagrangian} if $x^*\omega=0$. In this case, we say $x(\Sigma)\subset M$ is a Lagrangian submanifold. Lagrangian submanifolds play a central role in symplectic geometry, see \cite{ms} and references therein.
 A smooth family $x_s\colon\Sigma\to M$, $s\in\left(-\varepsilon,\varepsilon\right)$, of embeddings is called a \emph{Lagrangian} (respectively, \emph{Hamiltonian}) \emph{deformation of} $x_0=x$ if its derivative $X=\frac{\dd}{\dd s}x_s\big|_{s=0}$ is a symplectic (respectively, Hamiltonian) vector field along $x$, i.e., if the $1$-form $\sigma_X:=x^*(\omega(X,\cdot))$ on $\Sigma$ is \emph{closed} (respectively, \emph{exact}), see Subsection~\ref{subsec:basichamilt}.

\begin{example}\label{ex:sphere}
Consider the sphere $S^2=\C P^1$ with its standard K\"ahler structure. Any great circle $\Sigma$ on $S^2$ is Lagrangian (and minimal), and divides $S^2$ into two domains of same area. Any deformation of $\Sigma$ through other smooth curves is a Lagrangian deformation; however only those deformations through curves that still bisect the area of $S^2$ are Hamiltonian, by Stokes' Theorem.
\end{example}

A particularly interesting feature of a Lagrangian submanifold $\Sigma$ of a K\"ahler manifold $(M,\omega)$ is the following. Denote by $H$ the mean curvature vector of $\Sigma$ in $M$ and consider the contracted $1$-form $\sigma_H$. Then $\dd \sigma_H=\Ric|_\Sigma$, where $\Ric$ is the Ricci $2$-form on $M$. In particular, if $(M,\omega)$ is K\"ahler-Einstein, i.e., $\Ric=\kappa\,\omega$, then $\dd\sigma_H=0$ and thus $H$ is a symplectic vector field, i.e., an infinitesimal Lagrangian deformation. As observed by \cite{sw}, this suggests that it is natural to consider variational problems for volume of submanifolds with a Lagrangian constraint, as follows.

\subsection{Minimizing volume}
\label{subsec:minvol}

Let $(\omega,J,g)$ be a compatible triple on $M$ and consider the corresponding volume form $\vol_g$ on $M$. This provides a way to measure the volume of an embedding $x\colon\Sigma\to M$, by setting
\begin{equation}\label{eq:volg}
\V_g\big(x(\Sigma)\big)=\int_\Sigma x^*(\vol_{g}).
\end{equation}
A Lagrangian embedding $x_0\colon\Sigma\to M$ is called \emph{$g$-Lagrangian} (respectively, \emph{$g$-Hamiltonian}) \emph{stationary} if it has critical volume with respect to any Lagrangian (respectively, Hamiltonian) deformations $x_s\colon\Sigma\to M$, $s\in\left(-\varepsilon,\varepsilon\right)$, of $x_0$, i.e.,
\begin{equation}\label{eq:crit}
\frac{\dd}{\dd s}\V_g\big(x_s(\Sigma)\big)\Big|_{s=0}=\int_{\Sigma} g(H,X)=0,
\end{equation}
where $H$ is the mean curvature vector of $\Sigma$ in $(M,g)$, $X_p=\frac{\dd}{\dd s}x_s(p)\big|_{s=0}$ is the variation field and the integration is with respect to the pulled-back volume form $x^*(\vol_{g})$. In some references, Hamiltonian stationary Lagrangian submanifolds are also called \emph{$H$-minimal submanifolds}. This reflects the fact that they minimize the volume functional in some directions, the \emph{Hamiltonian} directions; while minimal submanifolds are critical points of the volume functional with respect to all directions (hence, in particular, are Hamiltonian stationary).

Notice that the above functional remains invariant if we replace $x(\Sigma)$ with $\phi(x(\Sigma))$, where $\phi\colon M\to M$ is an isometric symplectomorphism. Namely, $\phi(x(\Sigma))$ is still Lagrangian because $\phi$ is a symplectomorphism, and it has the same volume as $x(\Sigma)$ because $\phi$ is an isometry.

Let us derive the Euler-Lagrange equations for this functional. If $X$ is a Hamiltonian variation of $x_0$, then $\sigma_X=\dd h$, for some $h\colon\Sigma\to\R$. Thus, if $x_0$ is $g$-Hamiltonian stationary Lagrangian, then \eqref{eq:crit} reads
\begin{equation*}
0=\int_{\Sigma} g(H,X)=\int_{\Sigma} g(\sigma_H,\sigma_X)  =\int_{\Sigma} g(\sigma_H,\dd h)=\int_{\Sigma} (\delta\sigma_H)h,
\end{equation*}
where $\delta$ is the codifferential, i.e., the formal adjoint of the exterior derivative operator $\dd$. Since the above vanishes for all $h$, we get that the Euler-Lagrange equations for a Hamiltonian stationary Lagrangian embedding $x_0$ are
\begin{equation}\label{eq:el-kahler}
\delta\sigma_H=0.
\end{equation}
As pointed out before, if $(M,\omega,J)$ is K\"ahler-Einstein, then $\dd\sigma_H=0$. Consequently, in the K\"ahler-Einstein case, \eqref{eq:el-kahler} is equivalent to $\Delta \sigma_H=0$, where $\Delta=\dd\delta+\delta\dd$ is the (nonnegative) Laplace-de Rham operator; i.e., $\sigma_H$ is a harmonic $1$-form. In particular, it follows by Hodge theory that if $H^1(\Sigma,\R)=0$, then the Hamiltonian stationary Lagrangian embedding $x_0$ is actually minimal.

\begin{example}
An example of a Hamiltonian stationary Lagrangian submanifold that is not minimal is given by the standard tori $T=S^1(r_1)\times\dots\times S^1(r_n)\subset\C^n$. Note that $\C^n$ is K\"ahler-Einstein however the first de Rham cohomology of $T$ is not trivial. Another interesting family of examples of Hamiltonian stationary Lagrangians is given by curves with constant geodesic curvature on a Riemann surface.
\end{example}

\subsection{Second variation}

Following \cite[Def 2.6]{jls} and \cite[Thm 3.4]{oh2}, we now describe the second variation formula of the functional \eqref{eq:volg}, assuming that the compatible triple $(\omega,J,g)$ turns $M$ into a K\"ahler manifold. Given a $g$-Hamiltonian stationary Lagrangian embedding $x_0\colon\Sigma\to M$ and any Hamiltonian deformation $x_s$ of $x_0$,
\begin{multline*}
\frac{\dd^2}{\dd s^2}\V_g\big(x_s(\Sigma)\big)\Big|_{s=0}=\int_{\Sigma} \big[\, g(\Delta\dd h,\dd h)-\Ric(J\dd h,J\dd h)\\ -2g(\dd h\otimes\dd h\otimes\sigma_H,S)+g(\dd h,\sigma_H)^2\big],
\end{multline*}
where $h$ is a Hamiltonian potential for the variation $X_p=\frac{\dd}{\dd s}x_s(p)\big|_{s=0}$, i.e., $\sigma_X=\dd h$, and $S$ is a $(0,3)$-tensor on $\Sigma$ defined by $S(X,Y,Z)=g(J(B(X,Y)),Z)$, where $B$ is the second fundamental form of $x_0(\Sigma)\subset M$. In particular, we get an expression for the corresponding Jacobi operator $\jac_{x_0}$, which represents the above quadratic form and is the linearized operator of the Euler-Lagrange equations \eqref{eq:el-kahler},
\begin{equation}\label{eq:jacobioperator}
\jac_{x_0}(h)=\Delta^2 h+\delta \sigma_{\Ric^\perp(J\nabla h)}-2\delta\sigma_{B(JH,\nabla h)}-JH(JH(h)),
\end{equation}
where $\Ric^\perp(X)$ for a normal vector $X$ to $\Sigma$ is defined by $g(\Ric^\perp(X),Y)=\Ric(X,Y)$ for all $Y$ normal to $\Sigma$.

\section{The variational framework}
\label{sec:proofmain}

Let us describe the appropriate variational framework that yields the proof of the Theorem in the Introduction, as an application of the equivariant implicit function theorem, as formulated, e.g., in \cite[Thm 3.2]{bps}.

\subsection{Unparametrized embeddings}
\label{subsec:unpar}

We denote by $\embsigma$ the space of Lagrangian embeddings $x\colon\Sigma\hookrightarrow (M,\omega)$ of class $C^{k,\alpha}$, $k\geq 4$, where the regularity choice is due to Fredholmness reasons. The space $\embsigma$ endowed with the corresponding $C^{4,\alpha}$ topology is a smooth Banach manifold, and its tangent space at $x\in\embsigma$ can be identified with the space of $C^{4,\alpha}$ vector fields along $x$ that are Lagrangian variations of $x$, see Subsection~\ref{sub:LagSubmanifolds}.

There is a natural action of the diffeomorphism group $\mathrm{Diff}(\Sigma)$ on $\embsigma$. Two Lagrangian embeddings $x_i\in\embsigma$, $i=1,2$, are \emph{congruent} if there exists a diffeomorphism $\psi\colon\Sigma\to\Sigma$ such that $x_1=x_2\circ\psi$, i.e., if they belong to the same orbit of this action. Given a Lagrangian embedding $x\in\embsigma$, we denote by $[x]$ its congruence class, i.e., the orbit of $x$. Denote by $\lagsigma$ the orbit space of \emph{unparametrized} Lagrangian embeddings of $\Sigma$ in $(M,\omega)$:
\begin{equation}\label{eq:lagsigma}
\lagsigma:=\embsigma/\mathrm{Diff}(\Sigma),
\end{equation}
i.e., the set of congruence classes of Lagrangian embeddings of $\Sigma$ into $M$. In other words, an element $[x]\in\lagsigma$ is a class of embeddings of $\Sigma$ in $M$ whose elements can be obtained from one another by reparametrizations. The set $\lagsigma$ can be thus identified with the set of Lagrangian submanifolds of $M$ (of class $C^{4,\alpha}$) that are diffeomorphic to $\Sigma$. We consider $\lagsigma$ endowed with the induced quotient topology. The action of $\mathrm{Diff}(\Sigma)$ is neither free nor proper, and the orbit space $\lagsigma$ fails to be a smooth Banach manifold.

Let us briefly describe the structure of $\lagsigma$. A classic result due to Weinstein~\cite{we} states that given a smooth (i.e., $C^\infty$) Lagrangian embedding $x_0\colon\Sigma\to M$, there exists a smooth symplectomorphism $\Psi$ from a neighborhood $U$ of $x_0(\Sigma)$ in $(M,\omega)$ to a neighborhood $V$ of the zero section of the cotangent bundle $T\Sigma^*$ endowed with its canonical symplectic structure, such that $\Psi\circ x_0$ is the inclusion of the zero section into $T\Sigma^*$.
It is an easy observation that the image of a $1$-form is a Lagrangian submanifold of $T\Sigma^*$ if and only if this $1$-form is closed.
Consequently, small Lagrangian deformations of $x_0\colon \Sigma\to M$ are parametrized by closed $1$-forms on $\Sigma$.
More precisely, given any Lagrangian embedding $x\colon\Sigma\to U$ of class $C^{4,\alpha}$ sufficiently close to $x_0$, there exists a (unique) closed $1$-form $\eta_x$ on $\Sigma$ such that $\Psi\big(x(\Sigma)\big)$ is the image of $\eta_x\colon\Sigma\to T\Sigma^*$. In other words, the Lagrangian embeddings $\Psi\circ x$ and $\eta_x$ are congruent.
The map $[x]\mapsto\eta_x$ gives a continuous bijection from a neighborhood of $[x_0]$ in $\lagsigma$ to a neighborhood of the origin in the Banach space of closed $1$-forms on $\Sigma$ of class $C^{4,\alpha}$. As $x$ varies in the set of smooth Lagrangian embeddings of $\Sigma$ into $M$, such bijections form an atlas of charts for the topological manifold $\lagsigma$.

We observe, however, that the transition maps between two of these charts are in general only continuous, and not differentiable.
This is due to a subtle technicality, which in particular implies that right-composition with a diffeomorphism of class $C^{4,\alpha}$ is
not a differentiable map in the set of maps of class $C^{4,\alpha}$ between two smooth manifolds. This and other relevant issues concerning the lack of regularity of the space of unparametrized embeddings are discussed thoroughly in \cite{AliPic10}. As explained in this reference, since we are only interested in local questions around a smooth embedding, we can use the above chart as an identification and formally treat $\lagsigma$ as a smooth manifold. In this way, for convenience of notation we henceforth refer to, e.g., tangent spaces, distributions and smooth functions on $\lagsigma$, and the implicit rigorous version of these objects are the corresponding objects defined in a small neighborhood of the origin of the Banach space of closed $1$-forms on $\Sigma$ of class $C^{4,\alpha}$.

Under the above convention, if $x_0\in\embsigma$ is smooth, the tangent space at $[x_0]$ to $\lagsigma$ can be identified as
\begin{equation}\label{eq:tangent}
T_{[x_0]}\lagsigma = Z^1(\Sigma),
\end{equation}
i.e., with the Banach space $Z^1(\Sigma)$ of closed $1$-forms on $\Sigma$ of class $C^{4,\alpha}$. Note this is the image of the surjective linear map $T_{x_0}\embsigma\ni X\mapsto \sigma_X\in Z^1(\Sigma)$, whose kernel corresponds to variations tangent to $x_0$, i.e., reparametrizations, which form the tangent space to the orbit of $\mathrm{Diff}(\Sigma)$ that passes through $x_0$. This linear map is precisely the linearization of the orbit space projection $\embsigma\to\lagsigma$ at the smooth embedding $x_0$.

The tangent space \eqref{eq:tangent} has a distinguished subspace, namely $B^1(\Sigma)$, formed by exact $1$-forms on $\Sigma$ of class $C^{4,\alpha}$. This subspace corresponds to Hamiltonian variations of $x_0$, and gives rise to an integrable distribution\footnote{Since we are working locally around smooth points, consider this distribution defined in an open subset that is the domain of a chart; where it is integrable in the usual sense
that it is tangent to a foliation of this subset.} of $\lagsigma$ with codimension $b_1(\Sigma)=\dim H^1(\Sigma,\R)$, see \cite{we2}. We call this distribution the \emph{Hamiltonian distribution} in $\lagsigma$. Given a Lagrangian embedding $x_0\colon\Sigma\to M$, the integral leaves of the Hamiltonian distribution near
$[x_0]$ are parametrized by elements of the first de Rham cohomology $H^1(\Sigma,\mathds R)$. Given a closed $1$-form $\eta$ on $\Sigma$, we denote by $[\eta]$ its cohomology class and by $\lagsigma_{[\eta]}$ the integral leaf of the Hamiltonian distribution corresponding to $[\eta]$. In particular, when $\eta$ is exact, i.e., $[\eta]=0$, then $\lagsigma_{[\eta]}$ is the integral leaf through $[x_0]$, i.e., consists of all the Hamiltonian deformations of $x_0$.

\subsection{Volume functional}
We now describe how to encode the variational problem described in Subsection \ref{subsec:minvol} in the above setup, for a varying family of metrics. Namely, we start from a family of volume functionals parametrized by a family $g_t\in\Met(M)$, $t\in [-\delta,\delta]$, of metrics on $M$,
\begin{equation*}
\V\colon\embsigma\times [-\delta,\delta]\to\R,\quad \V(x,t)=\V_{g_t}\big(x(\Sigma)\big).
\end{equation*}
This functional is clearly invariant under reparametrizations, i.e., under the action of $\mathrm{Diff}(\Sigma)$. Hence, it passes to the quotient, defining a continuous map
\begin{equation}\label{eq:vol}
\V\colon\lagsigma\times [-\delta,\delta]\to\R,
\end{equation}
that is smooth in every local chart around $[x_0]\in \lagsigma$, where $x_0\in\embsigma$ is smooth, see \cite[Cor~4.4]{AliPic10}.

For any fixed $t\in[-\delta,\delta]$, the critical points of \eqref{eq:vol} are exactly the $g_t$-Lagrangian stationary Lagrangian embeddings of $\Sigma$ in $M$. Consider now a Lagrangian embedding $x_0\colon\Sigma\to M$, and
and let $\eta$ be a closed $1$-form on $\Sigma$ whose cohomology class $[\eta]$ belongs to a sufficiently small
neighborhood of $0$ in the vector space $H^1(\Sigma,\mathds R)$. For any fixed $t\in[-\delta,\delta]$, the critical points of the restriction of \eqref{eq:vol} to $\lagsigma_{[\eta]}$ are precisely the $g_t$-Hamiltonian stationary Lagrangian embeddings of $\Sigma$ in $M$ that belong to $\lagsigma_{[\eta]}$. This is the constrained variational problem that we use to prove our main result. Under the appropriate identifications, the Euler-Lagrange equation and Jacobi operator of this variational problem coincide with the ones discussed in the previous section.

Note that if there is an isometric Hamiltonian action of a Lie group $G$ on $M$, the induced $G$-action by left-composition on the space of embeddings $\embsigma$ commutes with the $\mathrm{Diff}(\Sigma)$-action and hence induces a $G$-action on $\lagsigma$. It is a straightforward observation that the Hamiltonian distribution in $\lagsigma$ is preserved by this $G$-action, and that the above constrained variational problem is also invariant under such action.

\subsection{$G$-nondegenerate embeddings}
Since the variational problem in question is $G$-invariant, the linearization of deformations that correspond to the $G$-action will automatically produce elements in the kernel of the Jacobi operator \eqref{eq:jacobioperator}. More precisely, for each $x_0$ consider the linear map
\begin{equation}\label{eq:incl}
\mathfrak g\ni X\longmapsto \sigma_{X^*}\in B^1(\Sigma),
\end{equation}
which associates to $X\in\mathfrak g$ the exact $1$-form on $\Sigma$ given by $\sigma_{X^*}=x_0^*\big(\omega(X^*,\cdot)\big)$, where $X^*$ is the action field corresponding to $X$. Such $1$-form is exact, because the $G$-action is Hamiltonian. Since the action preserves $\omega$, $g_0$ and the Hamiltonian distribution, the image $\mathcal N_{x_0}$
of the linear map \eqref{eq:incl} is contained in the kernel of $\jac_{x_0}$, since this is the second variation of the volume functional (restricted to the integral leaf $\lagsigma_{[0]}$ through $[x_0]\in\lagsigma$ of the Hamiltonian distribution). Here we are identifying the space of exact $1$-forms $B^1(\Sigma)$ of class $C^{k,\alpha}$ on $\Sigma$ with the space of real-valued functions modulo constants $C^{k,\alpha}(\Sigma)/\R$; and $\mathcal N_{x_0}$ can be identified with the tangent space at $[x_0]$ to the $G$-orbit of $[x_0]\in\lagsigma_{[0]}$.

\begin{definition}\label{def:nondeg}
The $g_0$-Hamiltonian stationary embedding $x_0$ is said to be \emph{$G$-nondegenerate} if $\mathcal N_{x_0}$ coincides
with the kernel of $\jac_{x_0}$ in $B^1(\Sigma)$.
\end{definition}

In other words, $x_0$ is $G$-nondegenerate if the kernel of $\jac_{x_0}$ is as small as it can be, since fields originating from \eqref{eq:incl} are necessarily in it. Examples of $G$-nondegenerate embeddings will be given in Section~\ref{sec:examples}.

\subsection{Proof of main result}
We are now ready for the proof of the Theorem in the Introduction. For convenience, we restate it below in the language of compatible triples and unparametrized embeddings.

\begin{theorem*}
Let $(M,\omega,g_0,J_0)$ be a K\"ahler manifold with an isometric Hamiltonian action of a compact Lie group $G$. Let $(\omega,g_t,J_t)$ be a deformation of the compatible triple $(\omega,g_0,J_0)$, such that $G$ acts by $g_t$-isometries and $J_t$-biholomorphisms, for all $t\in [-\delta,\delta]$. Suppose $x_0\colon\Sigma\to (M,\omega)$ is a $G$-nondegenerate $g_0$-Hamiltonian stationary Lagrangian embedding. Then, there exists $\varepsilon>0$, a neighborhood $\mathcal V$ of $[x_0]\in\lagsigma$, a neighborhood $\mathcal E$ of $[0]\in H^1(\Sigma,\mathds R)$ and a map $x\colon\, (-\varepsilon,\varepsilon)\times\mathcal E\,\to\mathcal V$; such that $x(0,0)=[x_0]$ and $x(t, [\eta])$ is a $g_t$-Hamiltonian stationary Lagrangian unparametrized embedding in $\lagsigma_{[\eta]}$, for all $|t|<\varepsilon$ and all $[\eta]\in\mathcal E$. Moreover, given $(t,[\eta])\in (-\varepsilon,\varepsilon)\times\mathcal E$, if $[x_*]\in\mathcal V$ is a $g_t$-Hamiltonian stationary Lagrangian unparametrized embedding in $\lagsigma_{[\eta]}$ sufficiently close to the $G$-orbit of $[x_0]$, then there exists $\phi\in G$ such that $\phi([x_*])=x(t,[\eta])$.
\end{theorem*}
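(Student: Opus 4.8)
The plan is to apply the equivariant implicit function theorem of \cite{bps} to the map that assigns to an unparametrized embedding, a deformation parameter, and a cohomology class, the Euler--Lagrange operator $\delta\sigma_H$ of the constrained volume functional. Concretely, I would work in the charts described in Subsection~\ref{subsec:unpar}: near $[x_0]$ the space $\lagsigma$ is identified with a neighborhood of the origin in the Banach space $Z^1(\Sigma)$ of closed $1$-forms of class $C^{4,\alpha}$, and the leaves $\lagsigma_{[\eta]}$ of the Hamiltonian distribution are the affine slices $\eta + B^1(\Sigma)$. After identifying $B^1(\Sigma)\cong C^{4,\alpha}(\Sigma)/\R$ as in the text, the natural object to study is
\begin{equation*}
F\colon \mathcal U\times(-\delta,\delta)\times\mathcal E\longrightarrow C^{0,\alpha}_0(\Sigma),\qquad F([x],t,[\eta]) = \delta_{g_t}\,\sigma_{H_{g_t}([x])},
\end{equation*}
where $C^{0,\alpha}_0(\Sigma)$ denotes $C^{0,\alpha}$ functions with zero mean (the natural target, since $\delta\sigma_H$ integrates to zero), $\mathcal U$ is a neighborhood of $[x_0]$ in $\lagsigma$, and for each fixed $(t,[\eta])$ we restrict the first variable to the leaf $\lagsigma_{[\eta]}$. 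By the computations recalled in Section~\ref{sec:stationarysubmflds}, the zero set of $F(\cdot,t,[\eta])$ in $\lagsigma_{[\eta]}$ is exactly the set of $g_t$-Hamiltonian stationary Lagrangian embeddings lying in that leaf, and $F([x_0],0,[0])=0$ since $x_0$ is $g_0$-Hamiltonian stationary. By \cite[Cor~4.4]{AliPic10} and the smoothness of $g\mapsto J_g$ from Proposition~\ref{prop:gtoj} (via Corollary~\ref{cor:deform}, which guarantees the deformed triples $(\omega,g_t,J_t)$ stay compatible and $G$-invariant), $F$ is a smooth map of Banach manifolds, and it is $G$-equivariant because the whole constrained variational problem is $G$-invariant, as noted at the end of Subsection~\ref{subsec:unpar}.

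Next I would check the Fredholm hypothesis. The partial derivative $D_{[x]}F([x_0],0,[0])$, restricted to the leaf direction $B^1(\Sigma)$, is the Jacobi operator $\jac_{x_0}$ of \eqref{eq:jacobioperator}. This is a fourth-order elliptic operator on the compact manifold $\Sigma$ (its leading term is $\Delta^2$), formally self-adjoint, so by standard Schauder theory it is a Fredholm operator of index zero from (the zero-mean part of) $C^{4,\alpha}(\Sigma)$ to $C^{0,\alpha}_0(\Sigma)$, with finite-dimensional kernel consisting of smooth functions. This is precisely the index-zero Fredholmness that the low-regularity equivariant implicit function theorem of \cite{bps} requires, and it is the reason the $C^{4,\alpha}$ regularity was fixed from the outset.

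The heart of the argument is the equivariant nondegeneracy input. The $G$-action on $\lagsigma$ fixes (the $G$-orbit of) the problem but not the point $[x_0]$, so $\ker\jac_{x_0}$ contains the tangent space $\mathcal N_{x_0}$ to the $G$-orbit of $[x_0]$, the image of the map \eqref{eq:incl}. Definition~\ref{def:nondeg} says exactly that these are equal: $\ker\jac_{x_0}=\mathcal N_{x_0}$. This is the hypothesis that lets the equivariant implicit function theorem of \cite{bps} apply: modulo the group directions, the linearized operator is an isomorphism, so one obtains, for each $(t,[\eta])$ near $(0,[0])$, a $G$-orbit of solutions depending smoothly on the parameters, with $x(0,[0])=[x_0]$, and moreover \emph{local uniqueness up to the $G$-action} — any $g_t$-Hamiltonian stationary Lagrangian in $\lagsigma_{[\eta]}$ close enough to the $G$-orbit of $[x_0]$ is $\phi(x(t,[\eta]))$ for some $\phi\in G$. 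Selecting one point in each solution orbit (e.g.\ via a local slice for the compact group action $G$ on $\lagsigma$) yields the smooth map $x\colon(-\varepsilon,\varepsilon)\times\mathcal E\to\mathcal V$, and both assertions of the theorem follow.

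The main obstacle I expect is not any single estimate but the bookkeeping around the \emph{non-smoothness of $\lagsigma$}: transition maps between the Weinstein charts are merely continuous, so one cannot literally differentiate on $\lagsigma$. As in \cite{AliPic10}, the fix is to perform the entire argument inside one fixed chart around the smooth embedding $x_0$ — i.e.\ on the Banach space of closed $1$-forms — where $F$, the leaf foliation, and the $G$-action are genuinely smooth, and only at the very end reinterpret the conclusion geometrically. A secondary point requiring care is verifying that the foliation by the leaves $\lagsigma_{[\eta]}$ is $G$-invariant and that restricting $F$ to a leaf is compatible with the equivariant setup of \cite{bps} (one must treat the extra parameters $t$ and $[\eta]$ as living in a finite-dimensional space on which $G$ acts trivially, so that the theorem's parametrized version applies verbatim); both are straightforward given the observations already recorded in Subsections~\ref{subsec:unpar} and the discussion preceding Definition~\ref{def:nondeg}.
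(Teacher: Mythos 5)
Your proposal is correct and follows essentially the same route as the paper: apply the equivariant implicit function theorem of \cite{bps} in a fixed Weinstein chart, with the $G$-nondegeneracy hypothesis ensuring the linearization is an isomorphism modulo the orbit directions, and with Fredholmness of index zero of the fourth-order elliptic Jacobi operator supplied by Schauder theory. The only cosmetic differences are that the paper phrases the setup in terms of the volume functional $\V(t,[\eta],[x])$ rather than the Euler--Lagrange map $F$, and accounts for the constants by an explicit index bookkeeping on the quotients $C^{k,\alpha}(\Sigma)/\R\to C^{k-4,\alpha}(\Sigma)/\R$ (index $-1$ composed with index $+1$) rather than by passing to zero-mean subspaces; these are equivalent.
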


\begin{proof}
By assumption, the $G$-action preserves $\omega$ and $g_t$, for all $t\in [-\delta,\delta]$. Thus, the induced $G$-action by left-composition on the space of embeddings $x\colon\Sigma\to M$ preserves Lagrangian embeddings, as well as their $g_t$-volume. Moreover, since the action is assumed Hamiltonian, it also preserves the leaves of the Hamiltonian distribution. This means that by choosing a $G$-nondegenerate $g_0$-Hamiltonian stationary Lagrangian embedding $x_0$ we are in the setup of the $G$-equivariant implicit function theorem studied in \cite{bps}.

Using identification \eqref{eq:tangent} and the canonical splitting $Z^1(\Sigma)=B^1(\Sigma)\oplus H^1(\Sigma,\R)$, we can write a sufficiently small neighborhood $\mathcal U$ of $[x_0]\in\lagsigma$ as a product $\mathcal{U}=\mathcal{U}_B\times\mathcal{U}_H$, where $\mathcal{U}_B$ is a neighborhood of $0\in B^1(\Sigma)$ and  $\mathcal{U}_H$ is a neighborhood of $[0]\in H^1(\Sigma)$. In this way, any $[x]\in\mathcal U$ corresponds to a unique pair $(\beta,[\eta])\in\mathcal{U}_B\times\mathcal{U}_H$, and $[x_0]$ corresponds to $(0,[0])$. Moreover, each slice $\mathcal L_{[\eta]}:=\{(\beta,[\eta]):\beta\in\mathcal{U}_B\}$ of $\mathcal U_B\times\mathcal U_H$ corresponds to the intersection of the leaf $\lagsigma_{[\eta]}$ with $\mathcal U$.
 The abstract implicit function theorem is applied to the volume functional $\V$ in \eqref{eq:vol}, considered as a function of three variables in the neighborhood $\mathcal U$ of $[x_0]$; $\V(t,\beta,[\eta])$, where $t$ varies in $[-\delta,\delta]$, $\beta$ varies in $\mathcal{U}_B$ and $[\eta]$ varies in $\mathcal U_H$. Thus, $g_t$-Hamiltonian stationary Lagrangian embeddings correspond to points where the derivative of $\V(t,\beta,[\eta])$ with respect to $\beta$ vanishes.

In this setup, the only hypothesis in the equivariant implicit function theorem that requires additional explanation is the Fredholmness of the Jacobi operator $\jac_{x_0}$, which corresponds to the second variation of $\V$ with respect to the variable $\beta$ at the point $(0,0,[0])$. This operator is defined on the tangent space to $\mathcal U_B$ at the origin, which is $B^1(\Sigma)$, that we now write as $B_{k,\alpha}^1(\Sigma)$ to emphasize that its elements are exact $1$-forms on $\Sigma$ \emph{of class $C^{k,\alpha}$}. Such linear space is canonically identified with $C^{k,\alpha}(\Sigma)/\R$, the real-valued functions H\"older space $C^{k,\alpha}$ modulo constants. Although so far we were implicitly using this identification for convenience, it is now important to write it explicitly. The Jacobi operator of the variational problem above mentioned is given by the composition of the linear maps in the bottom line of the following commutative diagram:
\begin{equation*}
\xymatrix@+14pt{ & C^{k,\alpha}(\Sigma) \ar[r]^{\jac_{x_0}} \ar[d] & C^{k-4,\alpha}(\Sigma) \ar[d] & \\
B^1_{k,\alpha}(\Sigma) \ar[r]^\cong & C^{k,\alpha}(\Sigma)/\R \ar[r]^{[\jac_{x_0}]} \ar@{.>}[ru] & C^{k-4,\alpha}(\Sigma)/\R \ar[r]^\cong &  B^1_{k-4,\alpha}(\Sigma),
}
\end{equation*}
where the vertical arrows are the natural projections and the top line operator $\jac_{x_0}$ is given by formula \eqref{eq:jacobioperator}. This formula shows that $\jac_{x_0}$ is a fourth-order (formally self-adjoint) linear elliptic operator. Thus, from standard Schauder estimates, such an operator is Fredholm of index $0$. Since constant functions are in the kernel of $\jac_{x_0}$, it induces an operator from the quotient $C^{k,\alpha}(\Sigma)/\R$, denoted by the dotted arrow. Such operator the has same image as $\jac_{x_0}$ and its kernel is $(\ker\jac_{x_0})/\R$. Hence, it is a Fredholm operator of index $-1$. The Jacobi operator $[\jac_{x_0}]\colon C^{k,\alpha}(\Sigma)/\R\to C^{k-4,\alpha}(\Sigma)/\R$ is given by the composition of the latter with the projection $C^{k-4,\alpha}(\Sigma)\to C^{k-4,\alpha}(\Sigma)/\R$, which is a Fredholm operator of index $+1$. Thus, $[\jac_{x_0}]$ is also Fredholm and its index is the sum of the indices of its factors, which is $0$. This proves the desired Fredholmness condition.
From\footnote{For further details, the reader may follow the proof of the constant mean curvature hypersurfaces version of the equivariant implicit function theorem discussed in \cite{bps}, which has many analogies with the application discussed here.} \cite[Thm 3.2]{bps}, we now get that there exists a map $\beta(t,[\eta])$ (defined locally) such that the map $x(t,[\eta]):=(\beta(t,[\eta]),[\eta])\in\mathcal U_B\times\mathcal U_H=\mathcal U\subset\lagsigma$ satisfies the desired conditions.
\end{proof}

\section{Examples of deformations}
\label{sec:examples}

In this section, we describe some deformations of compatible triples, and a few examples to which the result proved above applies.

\subsection{Cheeger deformations}
We now briefly outline an important example of metric deformation preserving symmetries, the so-called \emph{Cheeger deformation}, and the corresponding deformation of almost complex structures via Corollary~\ref{cor:deform}. Cheeger deformations are very important tools in Riemannian geometry (see \cite{zillermueter,zillersurvey}), and its counterpart in almost complex manifolds given by the above correspondence apparently has not yet been thoroughly explored.

Let $J_0\in\J$ be an $\omega$-compatible almost complex structure on $(M,\omega)$, and $g_0(\cdot,\cdot)=\omega(\cdot,J_0\cdot)$. Suppose $G$ is a compact Lie group of symplectomorphisms of $(M,\omega)$, that acts on $(M,g_0)$ by isometries. This gives a partition of $M$ into $G$-orbits, and the deformation $g_t$ of $g_0$ we now describe essentially works by rescaling $g_0$ in the direction of these orbits, leaving it unchanged in the complementary directions. This will then automatically cause $g_t$ to be $G$-invariant, i.e., the deformation will be through metrics that still have an isometric $G$-action. Such deformations have been extensively used in many situations, see \cite{zillermueter,zillersurvey}, and we outline its construction following the notation of the above references.

Fix a bi-invariant metric $Q$ on $G$, and consider the product manifold $M\times G$ endowed with the product metric $g+\tfrac1tQ$. Denote by $g\cdot p$ the action of $g\in G$ on $p\in M$ and define a submersion
\begin{equation*}
\rho\colon M\times G\to M, \quad \rho(p,g)=(g^{-1})\cdot p.
\end{equation*}
Let $g_t\in\Met(M)$ be the unique metric that makes $\rho$ a Riemannian submersion. It is immediate from its definition that $g_t$ is $G$-invariant, i.e., $G$ acts isometrically on $(M,g_t)$, $t>0$. The curve of metrics $g_t$, $t>0$, extends smoothly across $t=0$, and coincides with the original metric $g_0$ at this point. In this sense, $g_t$ is a deformation of $g_0$.

In order to analyze how $g_t$, $t>0$, differs from $g_0$, we have to introduce some more notation. Let $G_p$ be the isotropy group at $p\in M$ and $\mathfrak g_p$ its Lie algebra. Fix the $Q$-orthogonal decomposition $\mathfrak g=\mathfrak g_p\oplus\mathfrak m_p$, and identify $\mathfrak m_p$ with the tangent space $T_pG(p)$ to the $G$-orbit through $p$, via action fields; i.e., $X\in\mathfrak m_p$ is identified with $X^*_p=\frac{\dd}{\dd s}\exp(sX)|_{s=0}\in T_pG(p)$. This induces a $g_t$-orthogonal decomposition $T_pM=\mathcal V_p\oplus\mathcal H_p$ in vertical space $\mathcal V_p=\{X_p^*\in T_pG(p):X\in\mathfrak m_p\}$ and horizontal space $\mathcal H_p=\mathcal V_p^{\perp}$, where $^\perp$ is the $g_t$-orthogonal complement. Let
\begin{equation*}
P_t\colon\mathfrak m_p\to\mathfrak m_p, \quad Q(P_t(X),Y)=g_t(X^*_p,Y^*_p).
\end{equation*}
Then $P_t$ is a $Q$-symmetric automorphism that represents $g_t$ in terms of $Q$, and it can be easily computed (see \cite[Prop 1.1]{zillermueter}) that $P_t=P_0\,(\id+tP_0)^{-1}$, $t\geq0$. Thus, defining
\begin{equation*}
C_t\colon T_pM\to T_pM, \quad g(C_t(X),Y)=g_t(X,Y),
\end{equation*}
we get
\begin{equation*}
C_t(X)=P_0^{-1}P_t(X^\mathcal V)+X^\mathcal H,
\end{equation*}
where $X^\mathcal V$ and $X^\mathcal H$ are the vertical and horizontal components of $X$ respectively. If $P_0$ has eigenvalues $\lambda_i$, then by the above formula, $C_t$ has eigenvalues $\frac{1}{1+t\lambda_i}$ in the vertical directions and $1$ in the horizontal directions. This means that as $t$ increases, the metric $g_t$ shrinks in the direction of the $G$-orbits and stays unchanged in the remaining directions.

By Corollary~\ref{cor:deform}, given the above deformation $g_t$ of $g_0$, there is a corresponding deformation $J_t=J_{g_t}$ of $J_0$, such that the $G$-action by symplectomorphisms on $M$ is also by $g_t$-isometries and $J_t$-biholomorphisms, for $t>0$.

\subsection{Minimal Lagrangians in K\"ahler-Einstein manifolds}
Hamiltonian stationary Lagrangians in K\"ahler-Einstein manifolds have been extensively studied in the literature, see e.g. \cite{ohnita,ohnita2,bc,lee,oh1,oh2,pp}. In this subsection, we are interested in the particular case of \emph{minimal} Lagrangians $\Sigma$ of these manifolds, which are automatically Hamiltonian stationary, see Subsection~\ref{subsec:minvol}. In this case, the Jacobi operator \eqref{eq:jacobioperator} assumes a very simple form. Namely, if $\kappa$ is the Einstein constant of $(M,g_0)$, i.e., $\Ric=\kappa\,g_0$ and $x_0\colon\Sigma\to M$ is a minimal Lagrangian submanifold, then
\begin{equation}
\jac_{x_0}(h)=\Delta^2 h+\kappa\,\delta \sigma_{J\nabla h}=\Delta(\Delta h-\kappa \, h),
\end{equation}
since $\sigma_{J\nabla h}=-\dd h$. Let us analyze the kernel of this operator. Since $\Sigma$ is compact, the function $\Delta h-\kappa \, h$ is harmonic if and only if it is constant. We are working on $C^{k,\alpha}(\Sigma)/\R$, i.e., modulo constants, so it follows that elements in the kernel of the Jacobi operator are precisely the exact $1$-forms $\dd h$ such that
\begin{equation}\label{eq:eigenvalue}
\Delta h-\kappa\,h=0,
\end{equation}
i.e., $h$ is an eigenfunction of the Laplacian on $\Sigma$, corresponding to the Einstein constant $\kappa$ of $M$.

The \emph{Hamiltonian stability} of such minimal Lagrangian submanifolds, i.e., whether their Jacobi operator is nonnegative, plays an important role in the theory. An immediate conclusion from the above is that $\Sigma$ is Hamiltonian stable if and only if $\lambda_1(\Sigma)\geq\kappa$, where $\lambda_1(\Sigma)$ is the first eigenvalue of the Laplacian on (functions on) $\Sigma$, cf. \cite[Thm 4.4]{oh1}.

For our equivariant setup, the K\"ahler-Einstein manifold $M$ has an isometric Hamiltonian $G$-action, hence every action field not tangent to $\Sigma$ induces an element of its Jacobi operator through the map \eqref{eq:incl}. In particular, \eqref{eq:eigenvalue} always has nontrivial solutions, i.e., the Einstein constant $\kappa$ is an eigenvalue of the Laplacian on $\Sigma$. Such a minimal Lagrangian is $G$-nondegenerate if and only if all solutions $h$ of \eqref{eq:eigenvalue} are of this form, i.e., if there exists an action field $X^*$ such that $h=\langle\mu(\cdot),X^*\rangle|_\Sigma$, where $\mu$ is the moment map of the action. For example, one way to verify this condition is by dimensional reasons. Namely, if the dimension of the span of action fields normal to $x_0(\Sigma)\subset M$ is larger than or equal to the multiplicity of $\kappa$ as an eigenvalue of the Laplacian on $\Sigma$, then $\Sigma$ is $G$-nondegenerate. This provides a setup to which our main result applies, considering the $1$-parameter family of compatible triples $(\omega,J_t,g_t)$ obtained by a Cheeger deformation of $g_0$ with respect to the $G$-action.

Let us give a few concrete examples in the case $(M,g_0)$ is $\C P^n$ with its standard K\"ahler structure, for which $\kappa=2n+2$. More precisely, we will consider totally real minimal submanifolds $x_0\colon\Sigma\to \C P^n$ with parallel second fundamental form. These were classified by Naitoh and Takeuchi, see \cite[Sec 2]{ohnita}. Following the classification, Amarzaya  and Ohnita \cite{ohnita} determined which of those submanifolds are Hamiltonian stable. Namely, they obtained the following, see \cite[Thm 4.1]{ohnita}.

\begin{theorem}[Amarzaya-Ohnita]\label{thm:ohnita}
Let $\Sigma$ be a $n$-dimensional totally real minimal submanifold embedded in $\C P^n$ with parallel second fundamental form in the following table:
\begin{center}
\begin{tabular*}{0.5\textwidth}{@{\extracolsep{\fill}} c c }
\hline
$\Sigma$ & $n$ \rule[-1.2ex]{0pt}{0pt} \rule{0pt}{2.5ex} \\
\hline
\hline
$\SU(p)/\Z_p$  \rule[-1.2ex]{0pt}{0pt} \rule{0pt}{2.5ex}  & $p^2-1$  \\
$\SU(p)/\SO(p)\Z_p$  \rule[-1.2ex]{0pt}{0pt} \rule{0pt}{2.5ex} & $(p-1)(p+2)/2$ \\
 $\SU(2p)/\mathsf{Sp}(p)\Z_{2p}$  \rule[-1.2ex]{0pt}{0pt} \rule{0pt}{2.5ex} & $(p-1)(2p+1)$ \\
$\mathsf E_6/\mathsf F_4\Z_3$   \rule[-1.2ex]{0pt}{0pt} \rule{0pt}{2.5ex}  & $26$\\
\hline
\hline
\end{tabular*}
\end{center}
Then $\Sigma$ is a Hamiltonian stable minimal Lagrangian submanifold in $\C P^n$. Moreover, the kernel of the Jacobi operator of $\Sigma$ is exactly the span of the normal projections of Killing vector fields on $\C P^n$.
\end{theorem}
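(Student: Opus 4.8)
The strategy is to reduce the whole statement to computing the first eigenvalue of the Laplacian on each of the four homogeneous spaces in the table, using two facts already recorded above. First, since each $\Sigma$ is a \emph{minimal} Lagrangian in the K\"ahler--Einstein manifold $\C P^n$ (with $\Ric=\kappa\,g_0$, $\kappa=2n+2$), its Jacobi operator is $\jac_{x_0}(h)=\Delta(\Delta h-\kappa\,h)$, so -- working modulo constants -- $\ker\jac_{x_0}$ equals the $\kappa$-eigenspace $E_\kappa$ of $\Delta_\Sigma$ (see \eqref{eq:eigenvalue}), and $\Sigma$ is Hamiltonian stable iff $\lambda_1(\Sigma)\geq\kappa$. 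Second, the image $\mathcal N_{x_0}$ of \eqref{eq:incl} is always contained in $\ker\jac_{x_0}=E_\kappa$, and since the isometry group of $\C P^n$ acts transitively there is a Killing field of $\C P^n$ not everywhere tangent to $\Sigma$; hence $\kappa$ is genuinely an eigenvalue of $\Delta_\Sigma$ and $\lambda_1(\Sigma)\leq\kappa$ for \emph{every} such embedding. So the theorem splits into: (i) proving the reverse inequality $\lambda_1(\Sigma)\geq\kappa$ for the four spaces in the table (and exhibiting, for every other space on the Naitoh--Takeuchi list, a positive Laplace eigenvalue strictly below $\kappa$, which makes $\jac_{x_0}$ indefinite and $\Sigma$ unstable); and (ii) proving $\dim E_\kappa\leq\dim\mathcal N_{x_0}$, which forces $E_\kappa=\mathcal N_{x_0}$ and identifies $\ker\jac_{x_0}$ with the span of normal projections of Killing fields of $\C P^n$, i.e.\ with the restrictions $\langle\mu(\cdot),X^*\rangle|_\Sigma$ of the moment-map components.

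First I would make the embeddings explicit from the Naitoh--Takeuchi classification (recalled in \cite{ohnita}): each $\Sigma=G/K$ in the table is a symmetric $R$-space, realized as the $G$-orbit of the class $[e]$ of the identity inside $\C P^n=\mathbb P(J_\C)$, where $J$ ranges over the simple Euclidean Jordan algebras of real-symmetric, complex-Hermitian, and quaternionic-Hermitian $p\times p$ matrices and the $27$-dimensional exceptional algebra (so $n+1=\dim_\R J$ and $K$ is essentially $\Aut(J)$). The induced metric on $\Sigma$ is $G$-invariant, hence a fixed positive multiple of the canonical symmetric metric; this multiple is pinned down by the requirement that $x_0$ be minimal -- for instance via Takahashi's theorem applied to the Legendrian lift of $\Sigma$ in the round sphere $S^{2n+1}$, or by a direct computation of Fubini--Study norms of action fields at $[e]$. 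Only once this normalization is fixed does $\lambda_1(\Sigma)$ become comparable to the fixed number $\kappa=2n+2$.

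The core of the proof is the case-by-case determination of $\mathrm{Spec}(\Delta_\Sigma)$ via the Cartan--Helgason theory of spherical representations. With the canonical symmetric metric, the eigenvalues of $\Delta_{G/K}$ are the Casimir numbers $\langle\Lambda,\Lambda+2\rho\rangle$ as $\Lambda$ runs over the dominant weights of $G$ admitting a nonzero $K$-fixed vector -- a sublattice of the weight lattice cut out by the restricted root system of $(G,K)$ and, crucially, by the finite central factors $\Z_p,\Z_{2p},\Z_3$ occurring in the table (which is exactly why the groups, not merely the pairs, matter) -- with multiplicity $\dim V_\Lambda$. One then finds the nonzero spherical weight $\Lambda_0$ of least Casimir, rescales by the normalization factor from the previous step, and verifies that the outcome is exactly $\kappa=2n+2$ for the four table entries, whereas for every remaining Naitoh--Takeuchi space some spherical weight produces a positive eigenvalue below $\kappa$. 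To finish part (ii), one computes $\dim V_{\Lambda_0}$ and compares it with $\dim\mathcal N_{x_0}=\dim\mathfrak{su}(n+1)-\dim\{X\in\mathfrak{su}(n+1):\sigma_{X^*}=0\text{ on }\Sigma\}$; equality of these two integers upgrades the automatic inclusion $\mathcal N_{x_0}\subset E_\kappa$ to an identity, which is the asserted description of $\ker\jac_{x_0}$ and, in the language of Definition~\ref{def:nondeg} (with $G$ the full isometry group of $\C P^n$), the $G$-nondegeneracy of $x_0$.

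The main obstacle is the representation-theoretic bookkeeping in this last part: correctly cutting out the lattice of spherical weights for each of the four restricted root systems \emph{together with} its central modification, carrying the minimality normalization through each case so that the comparison with $2n+2$ is numerically exact rather than merely proportional, and handling the exceptional entry $\mathsf E_6/\mathsf F_4\Z_3$, where the minimizing weight, its Casimir value and $\dim V_{\Lambda_0}$ must all be computed inside the $\mathsf E_6$ root system. Ruling out small positive eigenvalues for the spaces \emph{absent} from the table is of comparable length but conceptually identical. Once these computations are assembled, the stability dichotomy and the equality $\ker\jac_{x_0}=\mathcal N_{x_0}$ follow formally from the criterion quoted at the outset.
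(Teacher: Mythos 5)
This statement is not proved in the paper: Theorem~\ref{thm:ohnita} is quoted verbatim from Amarzaya--Ohnita \cite{ohnita} (see also the spectral computations of Takeuchi for symmetric $R$-spaces on which that paper relies), so there is no internal proof to compare your attempt against. Judged on its own terms, your reduction is the correct and standard one, and it is exactly the framework the surrounding subsection sets up: for a minimal Lagrangian in a K\"ahler--Einstein manifold the Jacobi operator factors as $\Delta(\Delta h-\kappa h)$, so modulo constants $\ker\jac_{x_0}$ is the $\kappa$-eigenspace of $\Delta_\Sigma$, stability is equivalent to $\lambda_1(\Sigma)\geq\kappa$ (the paper cites \cite[Thm 4.4]{oh1} for this), the inclusion $\mathcal N_{x_0}\subset\ker\jac_{x_0}$ is automatic, and transitivity of $\Iso(\C P^n)$ forces $\lambda_1(\Sigma)\leq\kappa$. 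Your identification of the four spaces as standard embeddings of symmetric $R$-spaces via simple Euclidean Jordan algebras, and the plan to compute $\mathrm{Spec}(\Delta_\Sigma)$ by Cartan--Helgason theory with attention to the central factors $\Z_p$, $\Z_{2p}$, $\Z_3$ and to the minimality normalization of the induced metric, is faithful to how the cited proof actually proceeds.

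That said, what you have written is a plan rather than a proof: the entire mathematical content of the theorem lies in the case-by-case computations you defer --- fixing the metric normalization, determining the sublattice of spherical weights that descend to each quotient, evaluating the Casimir on the minimizing weight, and counting multiplicities --- and none of these is carried out, not even for one representative case. Two smaller points. First, in part (ii) you compare $\dim V_{\Lambda_0}$ with $\dim\mathcal N_{x_0}$, tacitly assuming that the $\kappa$-eigenspace consists of a single irreducible summand; a priori several inequivalent spherical weights could have the same Casimir value $\kappa$, and this must be excluded in each case before the dimension count closes the argument. Second, the theorem as stated makes no claim about the Naitoh--Takeuchi spaces absent from the table, so the instability analysis you propose for them is not needed; conversely, for the four listed spaces you need the full equality $\lambda_1=\kappa$ together with the multiplicity statement, not merely the inequality $\lambda_1\geq\kappa$, in order to obtain the final assertion that $\ker\jac_{x_0}$ is spanned by normal projections of Killing fields.
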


With the above result at hand, one can apply our deformation methods to a such $\Sigma$ using any Cheeger deformation of $\C P^n$ with respect to a $G$-action that has the following extra property: the normal space at each $p\in x_0(\Sigma)$ must be contained in the tangent space $T_p G(p)$ to the $G$-orbit through $p$. It then automatically follows that the image of the map \eqref{eq:incl} is precisely the kernel of the Jacobi operator $\jac_{x_0}$, i.e., $x_0$ is $G$-nondegenerate.

\subsection{Sasaki metrics on tangent bundles}
Let $(N,g)$ be a Riemannian manifold, and consider its tangent bundle $M=TN$. We recall a standard construction of a Riemannian metric on $M$, starting from a metric on $N$, see also \cite{tanbdls}. Let $\pi\colon TN\to N$ denote the canonical projection; for $v\in TN$, write $T_v(TN)=\mathrm{Ver}_v\oplus\mathrm{Hor}_v$, where $\mathrm{Ver}_v$ is the \emph{vertical subspace}, i.e., the tangent space to the fiber $T_pN$, where $p=\pi(v)$, and $\mathrm{Hor}_v$ is the
\emph{horizontal subspace} determined by the Levi-Civita connection of $g$. Given $\xi\in T_v(TN)$,
we denote by $\xi^\mathrm{ver}$ and $\xi^\mathrm{hor}$ its vertical and horizontal component, respectively.

The spaces $\mathrm{Ver}_v$ and $\mathrm{Hor}_v$ are $g_\mathrm S$-orthogonal.
There is a canonical isomorphism $\mathrm{Ver}_v=T_v(T_pN)\to T_pN$; the restriction of $g_\mathrm S$ to $\mathrm{Ver}_v$ is defined to be equal to the pull-back of $g_p$ through such isomorphism.
Moreover, the restriction of the differential $\mathrm d\pi(p)\big\vert_{\mathrm{Hor}_v}\colon\mathrm{Hor}_v\to T_pN$
is an isomorphism, the restriction of $g_\mathrm S$ to $\mathrm{Hor}_v$ is defined to be equal to the pull-back of $g_p$ through such isomorphism. This defines a smooth Riemannian tensor $g_\mathrm S$ on $M=TN$, called the \emph{Sasaki metric associated to $g$}.

In addition, a symplectic form $\omega_g$ can be defined on $M$, as follows. Given $v\in TN$, and $\xi,\eta\in T_v(TN)$, let
\begin{equation*}
\omega_g(\xi,\eta):=g(\xi^\mathrm{ver},\eta^\mathrm{hor})-g(\xi^\mathrm{hor},\eta^\mathrm{ver}).
\end{equation*}
This symplectic structure interacts well with the Sasaki metric, due to the following observation.

\begin{lemma}\label{thm:dfisometry}
Let $f\colon N\to N$ be a $g$-isometry. Then, $\dd f\colon TN\to TN$ preserves both the Sasaki metric $g_\mathrm S$ and the symplectic form $\omega_g$. Moreover, the map
\begin{equation*}
\Iso(N,g)\ni f\mapsto\mathrm df\in\Iso(TN,g_\mathrm S)\cap \Symp(TN,\omega_g)
\end{equation*}
is an injective Lie group homomorphism with closed image.\qed
\end{lemma}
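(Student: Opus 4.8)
The plan is to unwind the definitions of $g_\mathrm{S}$ and $\omega_g$ and check directly that the differential $\dd f$ is compatible with the horizontal/vertical splitting, after which the preservation of both tensors is immediate. First I would recall the two elementary facts about an isometry $f\colon N\to N$: it commutes with the Levi-Civita connection (hence its differential carries horizontal subspaces to horizontal subspaces, $\dd(\dd f)(\mathrm{Hor}_v)=\mathrm{Hor}_{\dd f(v)}$), and it obviously carries vertical subspaces to vertical subspaces since $\pi\circ\dd f=f\circ\pi$. Concretely, under the canonical identifications $\mathrm{Ver}_v\cong T_{\pi(v)}N$ and $\mathrm d\pi\big|_{\mathrm{Hor}_v}\colon\mathrm{Hor}_v\xrightarrow{\sim}T_{\pi(v)}N$, the map $\dd(\dd f)$ acts on both summands simply as $\dd f_{\pi(v)}\colon T_{\pi(v)}N\to T_{f(\pi(v))}N$. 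This is the one small computation worth doing carefully, using that parallel transport commutes with $f$.

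Granting that, the preservation of $g_\mathrm{S}$ follows: on each summand $g_\mathrm{S}$ is, by definition, the pull-back of $g$ via the above identifications, and $\dd f_{\pi(v)}$ is a $g$-isometry, so it preserves the restriction of $g_\mathrm{S}$ to $\mathrm{Ver}_v$ and to $\mathrm{Hor}_v$; since $\dd(\dd f)$ respects the splitting and the two summands are $g_\mathrm{S}$-orthogonal at every point, $\dd f$ is a $g_\mathrm{S}$-isometry. Likewise, $\omega_g(\xi,\eta)=g(\xi^{\mathrm{ver}},\eta^{\mathrm{hor}})-g(\xi^{\mathrm{hor}},\eta^{\mathrm{ver}})$ is expressed through the same identifications and the same metric $g$, so the same computation shows $(\dd f)^*\omega_g=\omega_g$. (Alternatively, once $\dd f$ is shown to preserve $g_\mathrm{S}$ and the almost complex structure $J$ defined by the splitting and $g$—which it does, as $J$ is built canonically from connection and metric—one could invoke \eqref{eq:groups2} to conclude preservation of $\omega_g$; but the direct check is shorter here.)

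For the last assertion, the assignment $f\mapsto\dd f$ is a group homomorphism because $\dd(f_1\circ f_2)=\dd f_1\circ\dd f_2$, and it is smooth as a map of Lie groups since differentiation depends smoothly on $f$ in the $C^\infty$ (or $C^1$) topology; by the previous paragraphs its image lands in $\Iso(TN,g_\mathrm{S})\cap\Symp(TN,\omega_g)$. Injectivity is clear: if $\dd f=\id_{TN}$ then in particular $\dd f$ acts as the identity on each fiber, forcing $f=\id_N$ (e.g. $f$ fixes a point and $\dd f_p=\id$, so $f=\id$ on the connected component, and one checks all components). Finally, the image is closed because $f\mapsto\dd f$ is a continuous injective homomorphism between Lie groups and $\Iso(N,g)$ is a closed subgroup of a space on which convergence of differentials forces convergence of the maps—more precisely, if $\dd f_n\to T$ in $\Iso(TN,g_\mathrm{S})$, then restricting to the zero section recovers a limit $f$ of the $f_n$ in $\Iso(N,g)$ (using that $\Iso(N,g)$ is closed), and $T=\dd f$ by continuity of differentiation. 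I expect this closedness/properness bookkeeping to be the only mildly delicate point; everything else is a direct unwinding of definitions, which is presumably why the authors mark the statement \qed.
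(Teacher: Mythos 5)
The paper gives no proof of this lemma at all --- the statement ends with a tombstone and the authors treat it as a routine verification --- so there is nothing to compare against; your direct check is correct and is precisely the argument being implicitly invoked. The essential point, that $\dd f$ respects the vertical/horizontal splitting (because an isometry preserves the Levi-Civita connection, hence parallel transport) and acts as $\dd f_{\pi(v)}$ on both factors under the canonical identifications with $T_{\pi(v)}N$, immediately gives invariance of both $g_{\mathrm S}$ and $\omega_g$, and your closedness argument via restriction to the zero section (recovering $f$ as the limit of the $f_n$ in the closed group $\Iso(N,g)$ and then identifying the limit with $\dd f$ fiberwise) is sound.
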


In particular, if $G$ is a Lie group acting by isometries on a Riemannian manifold $(N,g)$, we also have a $G$-action on $M$ by $g_\mathrm S$-isometries that preserve $\omega_g$. If the $G$-action on $N$ preserves a $1$-parameter family of Riemannian metrics $g_t$ (e.g., a Cheeger deformation), then the corresponding action on $M$ provides an example of the situation considered in the Theorem in the Introduction, with the choice of a $G$-nondegenerate Hamiltonian stationary Lagrangian of $M$.

\subsection{K\"ahler-Ricci flow}\label{subsec:ricciflow}
One situation in which the deformation $(\omega,J_t,g_t)$ preserves the integrability of $J_t$, i.e., the fact that $(M,\omega,J_t,g_t)$ is K\"ahler, is when $g_t$ evolves by the K\"ahler-Ricci flow. This means $g_t$ is a solution of the evolution equation $\frac{\partial}{\partial t}g_t=-2\Ric(g_t)$, which also clearly preserves the isometries of the initial metric $g_0$. Thus, if there is a Hamiltonian isometric action of $G$ on $(M,g_0)$, we automatically get that $G$ acts on $M$ by $g_t$-isometries and $J_t$-biholomorphisms. In this way, $G$-nondegenerate $g_0$-Hamiltonian stationary Lagrangians of $(M,\omega,J_0,g_0)$ may be deformed to $g_t$-Hamiltonian stationary Lagrangians up to the $G$-action, where $g_t$ is the K\"ahler-Ricci flow of $g_0$.

\end{document}